\DeclareMathOperator*{\argmin}{arg\,min}
\newtheorem{theorem}{Theorem}
\newtheorem{remark}{Remark}
\newtheorem{example}{Example}
\newenvironment{proof}{{\bf Proof.}}{\hfill$\square$}
\newcommand{\Vv}{L^2(\Gamma) \cap \beta H_0^2(\Omega)}
\newcommand{\VV}{V_\beta}
\newcommand{\VVh}{V_{\beta,h}}
\newcommand{\GG}{G_\beta}
\newcommand{\GGh}{G_{\beta,h}}
\newcommand{\Gg}{\beta L^2(\Omega)}
\newcommand{\MM}{M_{\beta}}
\newcommand{\MMh}{M_{\beta,h}}
\newcommand{\Mm}{\frac{1}{\beta} L^2(\Omega)}
\newcommand{\partialn}{\partial_{\mathbf{n}}}
\newcommand{\Va}{V_{\alpha}}
\newcommand{\VA}{\alpha L^2(\partial \Omega) \cap H^2(\Omega)}
\title{Variational Formulations of the Strong Formulation - Forward and Inverse Modeling
using Isogeometric Analysis and Physics-Informed Networks}
\author[a,b]{Kent-Andre Mardal}
\author[a]{Jarle Sogn}
\author[b]{Marius Zeinhofer}
\affiliation[a]{{Simula Research Laboratory, Kristian Augusts gate 23, 0164 Oslo, Norway}}
\affiliation[b]{{Department of Mathematics, University of Oslo, Oslo, Norway}}
\begin{document}

\maketitle

\section{Introduction}

The recently introduced  Physics-Informed Neural Networks (PINNs)~\cite{dissanayake1994neural, raissi2019physics, karniadakis2021physics}
have popularized least squares formulations of both forward and inverse problems
involving partial differential equations (PDEs) \emph{in strong form}. In this brief contribution we discuss the well-posedness of  both cases with PDEs of elliptic type. We address  weighting parameters, solution regularity and partial input data. Specifically, we discuss the following forward problem: Find $u$ such that 
\begin{align}
\label{L1}
u &= \argmin_v \ L_1(v; f, g) \quad \mbox{with }\\ 
L_1(v; f, g) &= \|-\Delta v - f\|_{L^2(\Omega)}^2 +  \alpha^2 \|v - g\|_{L^2(\partial \Omega)}^2,   
\end{align}
where $u$ is the unknown minimizer, $f, g$ are the body and surface forces, respectively and $\Omega$ is a bounded domain in $R^n$. 
Here, it has been observed that for some problems $\alpha$ needs
to be chosen carefully~\cite{mcclenny2020self, wang2021understanding, wang2022and}.  

The second problem we will consider is the following
inverse problem:  
\begin{align}
\label{L2}
u, f &= \argmin_{v,g} \ L_2(v, g; u_d, f_p) \mbox { with }\\ 
\ L_2(v, g; u_d, f_p) &= \gamma^2 \|-\Delta v - g\|_{L^2(\Omega)}^2 +  \|v - u_d\|_{L^2(\Gamma)}^2 + \beta^2 \|g-f_p\|_{L^2(\Omega)}^2 .    
\end{align}
Here $u$ and  $f$ are the unknown state and control, $u_d$ embodies state observations while $f_p$ encodes prior knowledge of the control variable. We will in particular consider \emph{partial observations}, meaning that $\Gamma$ is a proper subdomain of $\Omega$, and discuss the role of $\beta$ and $\gamma$. For this problem, we will assume that the Dirichlet boundary conditions for $v$ are exactly satisfied by the discrete ansatz.


The weighted Sobolev spaces are defined as follows. Let $X$ be a bounded domain
in $\mathbb{R}^d$. Then $L^2(X)$ is the space of square integrable functions on $X$ and $H^m(X)$ consists of functions with $m$ derivatives in $L^2(X)$. Finally, the intersection space
$\mu H^m(X) \cap \lambda H^n(Y)$ for $\mu, \lambda \in \mathbb{R}$ is defined as
\[
\mu H^m(X) \cap \lambda H^n(Y) = \{ u \ | \ \mu^2 \|u\|^2_{H^m(X)} +  \lambda^2 \|u\|^2_{H^n(Y)} < \infty \}
\]
We let $(\cdot, \cdot)_X$ denote the $L^2$ inner product on $L^2(X)$ and set $L^2(X) = H^0(X)$. For the intersection spaces below we will, with a slight abuse of notation, use the semi-norm of the Sobolev space with the highest regularity. For instance, the inner product of 
$\alpha L^2(\partial \Omega) \cap H^2(\Omega)$ is here defined as
\[
(u, v)_{ \alpha L^2(\partial \Omega) \cap H^2(\Omega)} = \alpha^2 (u, v)_{L^2(\partial \Omega)} + (\Delta u, \Delta v)_{L^2(\Omega)} .
\]
We remark that $\alpha L^2(\partial \Omega) \cap H^2(\Omega)$ is not norm equivalent to the standard $H^2$ space. 
However, in \cite{zeinhofer2023unified} it is shown that it holds $\|u\|_{\alpha L^2(\partial \Omega) \cap H^2(\Omega)} \ge C\|u\|_{H^{1/2}(\Omega)}$ which implies that $\alpha L^2(\partial \Omega) \cap H^2(\Omega)$ is a Hilbert space. We also remark that approximation properties are typically stated in semi-norms rather than full norms~\cite{bramble1970estimation} and as such the error rates are related to the highest order derivatives.


Below we will also use $\partialn$ to denote $ \mathbf{n} \cdot \nabla$. 
Finally, $H^2_0$ denotes the subspace of $H^2$ with trace zero. 
We will in the following assume the following regularity for the problem $-\Delta u  = f$ in $H^2_0$ ~\cite{grisvard2011elliptic}: 
\begin{equation}
\label{H2regularity}
\|u\|_{H^2(\Omega)} \le C \|f\|_{L^2(\Omega)}.
\end{equation}

\section{Forward problem}
Let $\Va=\VA$. 
A straightforward calculation shows that the minimizer of $L_1$ 
is the solution of the following variational problem: 
Find $u \in \Va$ such that
\begin{equation}
\label{L1var}
(\Delta u, \Delta v)_\Omega + \alpha^2 (u,v)_{\partial \Omega} = 
(f, -\Delta v)_{\Omega} + \alpha^2 (g, v)_{\partial \Omega} \quad \forall v \in \Va . 
\end{equation}
The corresponding strong problem is a bi-harmonic problem that reads: 
Find $u$ such that 
\begin{align}
\label{L1strong1Alt}
\Delta^2 u &= -\Delta f, \quad &\mbox{ in } \ \Omega, \\
\label{L1strong2Alt}
         \alpha^2 u - \partialn \Delta u &= \alpha^2 g + \partialn f, \quad &\mbox{ on } \ \partial \Omega, \\
\label{L1strong3Alt}
         -\Delta u &= f, \quad &\mbox{ on } \ \partial \Omega. 
\end{align}

\begin{theorem}
The problem \cref{L1} or equivalently \cref{L1var}, corresponds to the bi-harmonic problem 
\cref{L1strong1Alt}-\cref{L1strong3Alt} and is well-posed
in $\Va$. 
\end{theorem}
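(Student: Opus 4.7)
The plan is to split the proof into three parts: (i) the equivalence of the minimization problem \cref{L1} with the variational problem \cref{L1var}, (ii) well-posedness of \cref{L1var} in $\Va$ via Lax--Milgram, and (iii) the correspondence between \cref{L1var} and the strong form \cref{L1strong1Alt}--\cref{L1strong3Alt}. Part (i) is entirely standard: since $L_1$ is a quadratic functional on the Hilbert space $\Va$, computing its Fréchet derivative and setting it to zero yields exactly \cref{L1var} as the first-order optimality condition; the author already indicates this with "a straightforward calculation."

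For part (ii), I rely on the fact recalled in the introduction that $\|u\|_{\Va} \ge C \|u\|_{H^{1/2}(\Omega)}$, which guarantees that $\Va$ with the inner product $(u,v)_{\Va} = (\Delta u, \Delta v)_\Omega + \alpha^2 (u,v)_{\partial \Omega}$ is indeed a Hilbert space. With this in hand, the bilinear form $a(u,v) = (\Delta u, \Delta v)_\Omega + \alpha^2 (u,v)_{\partial \Omega}$ coincides with the inner product on $\Va$, so continuity and coercivity are both immediate with constant one. Continuity of the right-hand side follows from Cauchy--Schwarz: $|(f,-\Delta v)_\Omega| \le \|f\|_{L^2(\Omega)} \|\Delta v\|_{L^2(\Omega)} \le \|f\|_{L^2(\Omega)} \|v\|_{\Va}$ and $|\alpha^2(g,v)_{\partial\Omega}| \le \alpha \|g\|_{L^2(\partial\Omega)} \|v\|_{\Va}$, so Lax--Milgram applies and yields a unique minimizer with the expected stability estimate.

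Part (iii) is the most technical. First I would test \cref{L1var} against $v \in C_c^\infty(\Omega) \subset \Va$ to kill the boundary terms, obtaining $(\Delta u + f, \Delta v)_\Omega = 0$, which in the distributional sense says $\Delta(\Delta u + f) = 0$ in $\Omega$, i.e.\ \cref{L1strong1Alt}. Assuming enough smoothness to integrate by parts twice, a general test function $v \in \Va$ produces the boundary identity
\begin{equation*}
\int_{\partial \Omega} (\Delta u + f)\, \partialn v \; - \int_{\partial \Omega} \partialn(\Delta u + f)\, v \; + \; \alpha^2 (u-g, v)_{\partial \Omega} = 0.
\end{equation*}
Selecting test functions with $v|_{\partial\Omega}=0$ but arbitrary $\partialn v|_{\partial\Omega}$ isolates the first boundary integral and gives \cref{L1strong3Alt}; conversely, test functions with $\partialn v|_{\partial\Omega}=0$ and arbitrary trace $v|_{\partial\Omega}$ isolate the remaining terms and yield the Robin-type condition \cref{L1strong2Alt}.

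The main obstacle is the last step: the required independence of the two traces $v|_{\partial\Omega}$ and $\partialn v|_{\partial\Omega}$ on $\Va$, together with the regularity needed to justify the two integration by parts. The trace density is not an issue on smooth domains since $C^\infty(\overline{\Omega}) \cap \Va$ is dense and the Dirichlet and Neumann traces can be prescribed independently. The regularity issue is handled by invoking \cref{H2regularity} (and its higher-order counterpart) together with the interior ellipticity of the biharmonic operator, which gives sufficient smoothness of the minimizer for the strong-form identities to hold almost everywhere; alternatively, one can interpret \cref{L1strong1Alt}--\cref{L1strong3Alt} in the distributional/trace sense, in which case the equivalence holds without any extra regularity assumption.
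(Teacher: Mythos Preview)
Your proposal is correct and follows essentially the same route as the paper: the paper's proof is a terse two-line version of exactly what you describe, applying Green's lemma to rewrite $(-\Delta u - f, -\Delta v)_\Omega + \alpha^2(u-g,v)_{\partial\Omega}$ as a bulk term plus two boundary terms (your part (iii)), and then observing that coercivity is automatic because the bilinear form is the inner product on $\Va$ and the right-hand side is in the dual (your part (ii)). Your treatment is considerably more careful about separating the two boundary conditions via independent choice of trace and normal trace, and about the regularity needed to justify the integration by parts, but the underlying argument is the same.
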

\begin{proof}
The result is a direct consequence of Green´s lemma.  
\begin{align*}
(-\Delta u -f, -\Delta v)_\Omega +
 \alpha^2 (u -g, v)_{ \partial \Omega}   
 &= 
 - (-\Delta^2 u - \Delta f, v)_\Omega 
     -  (-\Delta u -f,  \partialn v)_{\partial \Omega} \\
    &- (\partialn (-\Delta u -f ) 
    +  \alpha^2 (u -g ), v )_{\partial \Omega} . 
\end{align*} 
The coercivity of \cref{L1var} follows as the bilinear form is an inner product and the right-hand side clearly lies in the dual space of $\Va$.
\end{proof}

Hence, we remark that in \cref{L1strong2Alt} a weighted average of the 
boundary condition and normal derivative of the PDE at the boundary  is enforced. 

\section{Inverse Problem}
We discussed the handling of boundary conditions in the previous 
section so let us here simplify the situation and only 
consider homogeneous Dirichlet conditions, which we assume to be satisfied exactly by the ansatz class. 
We recall that $\Gamma$ is a subdomain of $\Omega$. 
Further, let
 $\VV=\Vv$,  $\GG=\Gg$, and $\MM=\Mm$. 
Then, the minimizers $u,f$ of $L_2$ in \cref{L2} satisfy  the following variational problem: 
Find $u\in\VV,f\in\GG \in $ such that 
\begin{align}
\label{L2var1}
(u, v)_\Gamma + \gamma^2 (-\Delta u -f , -\Delta v)_\Omega &= ( u_d, v)_{\Gamma},  &\forall v\in\VV    \\
\label{L2var2}
\gamma^2(-\Delta u  , -g)_\Omega + (\beta^2 + \gamma^2) (f, g)_\Omega         &= \beta^2 (f_p, g)_\Omega, &\forall g\in\GG
\end{align}

The minimizers $u\in \VV,f\in \GG$ of \cref{L2var1}-\cref{L2var2} satisfy  the following penalized Lagrangian problem problem: 
Find $u,f,\lambda \in $ such that
\begin{align}
\label{L2var1Lag}
(u, v)_\Gamma +  (\lambda, -\Delta v)_\Omega &= (u_d, v)_\Gamma, \quad &\forall v\in \VV \\
\label{L2var2Lag}
\beta^2 (f, g)_\Omega  + (\lambda, -g)_\Omega     &= \beta^2 (f_p, g)_\Omega, \quad &\forall g\in \GG  \\
\label{L2var3Lag}
(-\Delta u -f , \mu)_\Omega - \frac{1}{\gamma^2} (\lambda, \mu)_\Omega       &= 0, \quad &\forall \mu \in \MM 
\end{align}
It is straightforward to obtained \cref{L2var1}-\cref{L2var2} by eliminating $\lambda$
from \cref{L2var1Lag}-\cref{L2var3Lag}.

\begin{theorem}
\label{wellposedinverse}
The problem \cref{L2}, \cref{L2var1}-\cref{L2var2}, or \cref{L2var1Lag}-\cref{L2var3Lag}   is well-posed if $1/\gamma < \beta$
and if the solution $u$ admits $H^2$ regularity. 
\end{theorem}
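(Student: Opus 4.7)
The plan is to recast the Lagrangian system \cref{L2var1Lag}-\cref{L2var3Lag} as a perturbed saddle-point problem on $\VV \times \GG \times \MM$ and apply the Brezzi framework with a stabilization term. Writing the system in the canonical form
\begin{align*}
a((u,f),(v,g)) + b((v,g),\lambda) &= F(v,g), \\
b((u,f),\mu) - c(\lambda,\mu) &= 0,
\end{align*}
with $a((u,f),(v,g)) := (u,v)_\Gamma + \beta^2 (f,g)_\Omega$, $b((v,g),\mu) := (-\Delta v - g, \mu)_\Omega$ and $c(\lambda,\mu) := \gamma^{-2}(\lambda,\mu)_\Omega$, the well-posedness reduces to verifying continuity and kernel-coercivity of $a$, the inf-sup condition for $b$, and continuity and positive semi-definiteness of $c$.

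First I would check the ker-coercivity of $a$. The kernel of $b$ consists of pairs $(v,g)$ with $g = -\Delta v$, so on this kernel $a((u,f),(u,f)) = \|u\|_{L^2(\Gamma)}^2 + \beta^2\|\Delta u\|_{L^2(\Omega)}^2$, while $\|(u,f)\|_{\VV\times\GG}^2 = \|u\|_{L^2(\Gamma)}^2 + 2\beta^2\|\Delta u\|_{L^2(\Omega)}^2$, yielding a constant of $1/2$. The $H^2$-regularity \cref{H2regularity} is used exactly here: because $\VV$ employs $\|\Delta v\|_{L^2(\Omega)}$ as the $H^2$-seminorm, \cref{H2regularity} is what guarantees this seminorm is equivalent to the full $H^2$-seminorm on $H^2_0(\Omega)$, so $\|\cdot\|_{\VV}$ is genuinely a norm and $\GG$-control of $f$ via $\Delta u$ gives control of $|u|_{H^2}$. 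Next, the inf-sup for $b$ is established by the explicit Fortin choice $(v,g) = (0,-\mu)$: one computes $b((0,-\mu),\mu) = \|\mu\|_{L^2(\Omega)}^2$ and $\|(0,-\mu)\|_{\VV\times\GG}\|\mu\|_{\MM} = \beta\|\mu\|_{L^2(\Omega)}\cdot \beta^{-1}\|\mu\|_{L^2(\Omega)} = \|\mu\|_{L^2(\Omega)}^2$, so the inf-sup constant equals $1$.

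The stabilization $c$ is symmetric positive semi-definite, and its continuity constant in the $\MM$-norm is $\beta^2/\gamma^2$. The hypothesis $1/\gamma < \beta$ controls the magnitude of this perturbation relative to the saddle-point constants of $(a,b)$, ensuring that the perturbed Brezzi theorem (e.g.\ as in Boffi--Brezzi--Fortin) yields a bounded inverse for the augmented operator and hence well-posedness of \cref{L2var1Lag}-\cref{L2var3Lag} in $\VV\times\GG\times\MM$. Equivalence between \cref{L2}, the reduced pair \cref{L2var1}-\cref{L2var2}, and the Lagrangian formulation follows from the elimination of $\lambda$ already indicated before the theorem, together with uniqueness of minimizers for the strictly convex quadratic $L_2$.

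The main obstacle is the joint parameter dependence: the continuity of $c$ scales like $\beta^2/\gamma^2$ in the chosen norms, the inf-sup of $b$ scales like $1$, and the ker-coercivity of $a$ is parameter free. Tracking how these constants combine in the perturbed Brezzi stability estimate, and checking that the threshold at which the combined inf-sup degenerates is exactly $\beta\gamma = 1$, is the delicate part. Once this bookkeeping is carried out, the stability bound in the natural norms follows and the theorem is proved.
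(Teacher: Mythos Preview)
Your proposal is correct and follows essentially the same route as the paper: verify the Brezzi conditions for the unpenalized ($\gamma=\infty$) saddle-point problem---kernel coercivity of $a$ via the $H^2$-regularity assumption and inf-sup of $b$ via the explicit test pair $(0,\pm\mu)$---and then invoke a perturbed saddle-point stability result to handle the $c$-term for finite $\gamma$. The only cosmetic difference is that the paper cites Braess~\cite{braess1996stability} for the penalized stability step (phrasing the requirement as ``the penalty is bounded by the norm of the Lagrange multiplier''), whereas you refer to the Boffi--Brezzi--Fortin version and express the same smallness condition through the continuity constant $\beta^2/\gamma^2$ of $c$ in the $\MM$-norm.
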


\begin{proof}
We first consider the case $\gamma=\infty$. The saddle-point problem must then satisfy the
four Brezzi conditions~\cite{brezzi1974existence}. The two conditions of continuity follow from a straightforward calculation. Hence, we will only 
consider the coercivity and inf-sup conditions. 
The coercivity follows from $H^2$ regularity \cref{H2regularity}: 
\[
(u, u)_{L^2(\Gamma)} + \beta^2(f,f)_{L^2(\Omega)} \ge (u, u)_{L^2(\Gamma)} + \frac12 \beta^2(f,f)_{L^2(\Omega)} + \frac{1}{2C^2} \beta^2(u,u)_{H^2(\Omega)}
\]
The inf-sup condition can be shown with $\hat u = 0$ and $\hat f = \mu$
\begin{align*}
\sup_{u,f} \frac{(-\Delta u - f, \mu)}{(\|u\|^2_{\VV} + \|f\|^2_{\GG})^{1/2}} \ge \\
\frac{(-\Delta \hat u - \hat f, \mu)}{(\|\hat u\|^2_{\VV} + \|\hat f\|^2_{\GG})^{1/2}} =
\frac{(\mu, \mu)}{(\|\mu \|^2_{\GG})^{1/2}} = \|\mu\|_{\MM}
\end{align*}
For $\gamma < \infty$ the theory of Braess \cite{braess1996stability} states that a saddle point problem with a penalty term is well posed as long as the penalty is bounded by the norm of the Lagrange multiplier, which is the case when  $1/\gamma < \beta$. 
\end{proof}

\begin{remark}
The above proof follows~\cite{mardal2017robust, mardal2022robust} and employs \cite{braess1996stability} for
the stability estimates involving the penalty term. However, using the more general theory~\cite{hong2021new, sogn2019schur}, the requirement
that $1/\gamma < \beta$ is removed if the norms $\VV = L^2(\Gamma)\cap (\frac{1}{\beta^2} + \frac{1}{\gamma^2})^{-1/2}H^2(\Omega)$ and 
 $\MM = (\frac{1}{\beta^2} + \frac{1}{\gamma^2})^{1/2}L^2(\Omega)$  
are used.  
\end{remark}

Let us consider conforming discretizations of the problem \cref{L2var1Lag}-\cref{L2var3Lag}
such that 
$\VVh\subset \VV$, 
$\GGh\subset \GG$, and 
$\MMh\subset \MM$. Furthermore, let 
\[
Z = \left\{ u\in\VV, f\in\GG \ | \ (-\Delta u - f, \mu) = 0, \forall \mu\in\MM \right\}
\]
and 
\[
Z_h = \left\{ u\in \VVh, f\in \GGh \ | \ (-\Delta u - f, \mu) = 0, \forall \mu\in \MMh \right\}
\]
For conforming discretizations in which $Z_h \subset Z$ or in our case $\Delta \VVh \subset \MMh$
the \Cref{wellposedinverse} applies, i.e., the same proof can be carried out in in the discrete case.


Finally, we remark that there are some subtleties of our approach of proving the well-posedness 
of discretizations of \cref{L2} via the Lagrange formulation~\cref{L2var1Lag}-\cref{L2var3Lag}.
A discrete version of \cref{L2var1Lag}-\cref{L2var3Lag} reads: 
\[
\begin{pmatrix}
M_{\Gamma} & & A^T \\
& \beta^2 M & M \\
A & M & -\frac{1}{\gamma^2}M
\end{pmatrix}
\begin{pmatrix}
u \\ f \\ \lambda 
\end{pmatrix}
=
\begin{pmatrix}
u_d \\ f_p \\ 0
\end{pmatrix} . 
\]
Here, $A$ is the stiffness matrix, $M$ is the mass matrix and $M_\Gamma$ is the mass matrix 
related to the partial observation. Note that $A$ is not symmetric as the trial and test functions differs. However, $M$ is symmetric. 
Eliminating $\lambda$ yields
\begin{equation}
\label{L2disc}
\begin{pmatrix}
M_{\Gamma} + \gamma^2 A^T M^{-1} A & \gamma^2 A^T  \\
\gamma^2 A  & (\beta^2 + \gamma^2) M \\
\end{pmatrix}
\begin{pmatrix}
u \\ f 
\end{pmatrix}
=
\begin{pmatrix}
u_d \\ f_p 
\end{pmatrix} . 
\end{equation}

Clearly, a discretization of \cref{L2var1}-\cref{L2var2} would appear similar to \cref{L2disc}, 
except for the inverted mass matrix. 
In most cases, e.g., continuous finite elements, isogeometric analysis or neural networks, the mass matrices are far from diagonal and simply replacing the inverted mass matrix with, for instance, a lumped variant results in loss of approximation. An alternative and more practical condition is that $-\Delta \VVh \subset \GGh$. Then $A M^{-1} A = \Delta^2$  on $\VVh$. 
This can be shown as follows: The upper bound 
\[
(A M^{-1} A u, u) = \sup_{\lambda \in G_h}\frac{(Au, \lambda)^2}{(M\lambda,\lambda)} = \sup_{\lambda \in G_h}\frac{(-\Delta u, \lambda)^2}{\|\lambda\|^2_{L^2(\Omega)}} \leq \|\Delta u\|^2_{L^2(\Omega)}.
\]
Furthermore, since $-\Delta u \in G_h$, we can choose $\lambda = -\Delta u$ and obtain a lower bound
\[
(A M^{-1} A u, u) = \sup_{\lambda \in G_h}\frac{(-\Delta u, \lambda)^2}{\|\lambda\|^2_{L^2(\Omega)}} \geq \frac{(-\Delta u, -\Delta u)^2}{\|\Delta u\|^2_{L^2(\Omega)}} ) = \|\Delta u\|^2_{L^2(\Omega)}.
\]

\section{Numerical Examples}

In this section we illustrate the theory with some numerical examples. 
In Example 1 we consider a discretization of the forward problem using 
isogeometric analysis (IGA)~\cite{cottrell2009isogeometric}. IGA easily allow for $H^2$ conforming discretizations. In  \Cref{t:table3}
we see that approximation rates corresponding to $H^2$ are obtained.  
Then in the Examples 2 and 3 we solve the inverse problem with IGA and neural networks (NNs), respectively. We remark that $-\Delta \VVh \subset \GGh$
is feasible both using IGA and NNs. In \Cref{t:table1dControl} and \Cref{t:table2dControl} we compare IGA with and without the $-\Delta \VVh \subset \GGh$ and note that approximations that fulfill this condition are superior. The same conclusion can be drawn in Example 3 where NNs with and without this condition are compared.  
We remark that in Example 2 and 3, for the purpose of investigating
the numerical properties of the system, we let the prior $f_p$ be the source function that corresponds with the exact solution. Such priors are rarely known in practice and our example should as such be classified as an inverse crime. 
However, as our interest here is to investigate the potential of the numerical scheme, rather than a concrete application, 
we allow ourselves to perform such a crime as the quality of the prior may vary. 
We remark that we have used long double for the IGA calculations as the condition number of the system is large. 

\begin{example}
\label{ex1}
Let $\Omega = (0,1)^2$ and consider the variational problem \cref{L1var}
by using tensor product B-splines  $S_{p,\ell, q}((0,1))$, where $p$ is the spline degree, $\ell$ is the number of uniform refinements, and $q=p-1$ is the continuity. The grid-size is $h = 2^{-\ell}$. For a problem with the following manufactured solution
\[
g = u = \cos(k\pi x) \cos(k\pi y), \quad f =\Delta u = -2k^2\pi^2\cos(k\pi x) \cos(k\pi y).
\]

In  \Cref{t:table3} we see that the error is virtually independent of $\alpha$
and that we get the expected linear convergence in $H^2$ for $p=2$. For $p=5$ 
we obtained forth order convergence (not shown) only for $\alpha>10^3$, but
all $\alpha$ has higher than third order convergence. 
\end{example}

\begin{table}[ht]
\caption{Error in full $H^2$ norm in \Cref{ex1},  $k=1$, $p=2$ except (last row $p=5$)}
\label{t:table3}
\begin{center}
  \begin{tabular}{| c || c | c | c | c | c || l |}
    \hline
    $\ell \;\backslash\; \alpha^2$ & $10^6$ & $10^{3}$ & $10^{0}$ & $10^{-3}$ & $10^{-6}$ & DoF \\ \hline \hline
    $3$ &  7.84e-02 & 7.85e-02 & 7.86e-02 & 7.86e-02 & 7.86e-02 & 100\\  \hline
    $4$ &  3.91e-02 & 3.91e-02 & 3.91e-02 & 3.91e-02 & 3.91e-02 & 324\\  \hline
    $5$ &  1.95e-02 & 1.95e-02 & 1.95e-02 & 1.95e-02 & 1.95e-02 & 1156\\  \hline
    $6$ &  9.76e-03 & 9.76e-03 & 9.76e-03 & 9.76e-03 & 9.76e-03 & 4356\\  \hline
    $6 \; (5)$ & 3.60e-09 & 8.22e-09 & 1.06e-07 & 1.10e-06 & 1.39e-05 & 4761\\  \hline
\end{tabular}$\quad$
\end{center}\end{table}

\begin{example}
\label{prob:2DInverse3by3Lim}
Let $\Gamma = (0.25,0.75)^2$ and $\Omega = (0,1)$, let $g = u_d$ and $u_d = \sin(\pi k x)\sin(\pi k y)$. Also let $f_p = 2\pi^2 k^2\sin(\pi k x)\sin(\pi k y)$. Let $U_h := \{u\in S_{p,\ell,p-1}(\Omega)\,:\, u|_{\partial \Omega} = 0\}$ and let $F_h$ be $F_h = S_{p,\ell,p-1}(\Omega)$ or $F_h = S_{p,\ell,p-3}(\Omega)$ ($\Delta U_h \subset F_h $)
and solve \eqref{L2var1}.

\Cref{t:table1dControl} and \Cref{t:table2dControl} shows the error of the state
in $H^2$ norm with and without the $\Delta U_h \subset F_H $ condition. 
Clearly, discretizations that satisify this condition has improved approximation. 
\end{example}

\begin{table}[ht]
\caption{$|u_h - u_d|_{H^2(\Omega)}$ in \Cref{prob:2DInverse3by3Lim}, $\ell =6$, $k=1$, $p=2$, $\Delta U_h\not\subset F_h$.  }
\label{t:table1dControl}
\begin{center}
  \begin{tabular}{| c || c | c | c |}
    \hline
    $\beta^2 \;\backslash\; \gamma^2$ & $10^0$ & $10^{2}$ & $10^{4}$  \\ \hline \hline
    $1$       & 9.78e-03 & 3.10e-02 & 0.291 \\  \hline
    $10^{-2}$ & 3.06e-02 & 0.290 & 0.335 \\  \hline
    $10^{-4}$ & 0.304    & 0.363 &0.364 \\  \hline
\end{tabular}$\quad$
\end{center}\end{table}

\begin{table}[ht]
\caption{$|u_h - u_d|_{H^2(\Omega)}$ in \Cref{prob:2DInverse3by3Lim}, $\ell =6$, $k=1$, $p=2$, $\Delta U_h\subset F_h$.}
\label{t:table2dControl}
\begin{center}
  \begin{tabular}{| c || c | c | c |}
    \hline
    $\beta^2 \;\backslash\; \gamma^2$ & $10^0$ & $10^{2}$ & $10^{4}$  \\ \hline \hline
    $1$       & 9.76e-03 & 9.76e-03 & 9.76e-03 \\  \hline
    $10^{-2}$ & 9.76e-03 & 9.76e-03 & 9.76e-03 \\  \hline
    $10^{-4}$ & 9.76e-03 & 9.76e-03 & 9.88e-03 \\  \hline
\end{tabular}$\quad$
\end{center}\end{table}

\begin{example}
\label{ex3}
In our third example we consider neural networks. Concretely, we consider shallow neural networks of width $n\in\mathbb N$, with $\tanh$ activation function, i.e., 
\[
u_\theta(x) = \sum_{i=1}^n a_i\tanh(w_i\cdot x + b_i) + c
\]
where $\theta$ consists of the trainable weights $a_i, w_i, b_i$ and $c$ for $i=1,\dots, n$. In the case of the inverse problem both $u_\theta$ and $f_\psi$ are parametrized as neural networks. To guarantee that $\Delta u_\theta$ can be expressed as a neural network, we use $\tanh''$ as activation function for the architecture employed for $f_\psi$. For the optimization, we rely on the recently proposed energy natural gradient method \cite{pmlr-v202-muller23b}. We handle the boundary conditions using the penalty approach of \cref{L1} with $\alpha=1$.
\end{example}

\begin{table}[ht]
\caption{Neural network approach using two shallow networks of width 32 for both $u$ and $f$ in \Cref{ex3}. In both cases $\operatorname{tanh}$ is used as an activation function, hence the Laplacian of $u_\theta$ is not representative by the network $f_\psi$. We report relative $H^2$ errors.}
\label{t:networks_no_containment}
\begin{center}
  \begin{tabular}{| c || c | c | c |}
    \hline
    $\alpha^2 \;\backslash\; \gamma^2$ & $10^0$ & $10^{2}$ & $10^{4}$  \\ \hline \hline
    $1$       & 1.32e-05 & 1.08e-04 & 9.27e-04 \\  \hline
    $10^{-2}$ & 2.79e-05 & 4.11e-05 & 5.95e-04 \\  \hline
    $10^{-4}$ & 5.63e-05 & 3.07e-04 & 4.38e-03 \\  \hline
\end{tabular}$\quad$
\end{center}\end{table}

\begin{table}[ht]
\caption{\Cref{ex3}: Neural network approach using two shallow networks of width 32 for both $u$ and $f$. We use $\operatorname{tanh}$ as activation function for $u_\theta$ and $\operatorname{tanh}'' = 2\operatorname{tanh}^3 - 2\operatorname{tanh}$ for $f_\psi$, hence the Laplacian of $u_\theta$ is representable by the network $f_\psi$. We report relative $H^2$ errors.}
\label{t:networks_containment}
\begin{center}
  \begin{tabular}{| c || c | c | c |}
    \hline
    $\alpha^2 \;\backslash\; \gamma^2$ & $10^0$ & $10^{2}$ & $10^{4}$  \\ \hline \hline
    $1$       & 1.34e-05 & 7.09e-05  & 3.05e-04 \\  \hline
    $10^{-2}$ & 1.44e-05 & 5.34e-05  & 3.56e-04 \\  \hline
    $10^{-4}$ & 3.17e-05 & 1.78e-04  & 7.30e-04 \\  \hline
\end{tabular}$\quad$
\end{center}\end{table}

\section{Acknowledgement}
The authors want to thank Miroslav Kuchta and Bastian Zapf
for interesting discussions on the topic
as well as support from the Research Council of Norway 
through the grants 300305 and 301013.

\bibliography{references}
\bibliographystyle{abbrvnat}

\end{document}